\newtheorem*{theorem}{Theorem}
\newtheorem*{remark}{Remark}
\newtheorem*{corollary}{Corollary}
\newtheorem*{acknowledgement}{Acknowledgement}
\newtheorem*{AddedInProof}{Added in Proof}
\font\tencyr=wncyr7
\def\cyr{\tencyr\cyracc}
\begin{document}

\title{The ``Wrong Minimal Surface Equation" does not have the Bernstein property}
\author{Peter \textsc{Lewintan}}\date{}
\author{Peter \textsc{Lewintan}\footnote{peter.lewintan@uni-due.de, University of Duisburg-Essen, Germany}}
\date{May 9, 2011}
\maketitle

\begin{abstract}
\noindent
A celebrated result of S. \textsc{Bernstein} \cite{Bernstein} states that every solution of the minimal surface equation over the entire plane $\mathds R^2$ has to be an affine linear function. Since the paper of \textsc{Bernstein} appeared in 1927, many different proofs and generalizations of this beautiful theorem were given, namely to higher dimensions and to more general equations, for a careful account we refer to the paper by \textsc{Simon} \cite{Simon2} and to the monograph by \textsc{Dierkes-Hildebrandt-Tromba} \cite[chap. 3]{DHT}.

In his paper \cite{Simon} \textsc{Simon} posed the question whether the equation
\begin{equation}
(1+{u_x}^2)u_{xx}+2u_x u_y u_{xy}+ (1+{u_y}^2)u_{yy} = 0
\label{toll}
\end{equation}
has the Bernstein property i.e. whether every $C^2$-solution defined on all of $\mathds R^2$ necessarily has to be affine.

We here show by a very simple argument that this is not the case.
\end{abstract}

\textit{Keywords:}
Bernstein property, wrong minimal surface equation, entire non-linear solutions\\[2pt]
\textit{AMS 2000 MSC:} 35A01, 35B08, 35B65, 35D05, 35J15\\[2pt]

To start with we consider $u\in C^2(\mathds R^2)$ to be a solution of the elliptic equation \eqref{toll} with $u_{xy} \equiv 0$ in the whole of $\mathds R^2$. Then $u$ has the form
\begin{align*}
u(x,y) = h(x)+g(y), &\ \text{with} \ g,h \in C^2(\mathds R) \\
 \text{and the equation \eqref{toll} becomes:} \ \  &(1+(h'(x))^2)h''(x) + (1+(g'(y))^2)g''(y) = 0.
\end{align*}
We put $(1+(h'(x))^2)h''(x) = c$ and hence $(1+(g'(y))^2)g''(y) = -c$, $c\in \mathds R$, and choose $c = 1$. (To get the linear solutions take $c=0$.)

By separation of variables we solve the equation  \ $(1+f^2)f' = 1$  with  $f=h'(x)$ \ and obtain \ \ $(1+f^2)df = dx$,\ \ or \ $f + \frac{\displaystyle f^3}{\displaystyle 3} = x$.
By Cardano's formulae:
\begin{equation*}
h'(x) = f(x) = \frac{1}{\sqrt[3]{2}}\left(\sqrt[3]{\sqrt{9x^2+4}+3x} -\sqrt[3]{\sqrt{9x^2+4}-3x} \right).
\end{equation*}
An integration yields:
\begin{equation*}\begin{split}
 h(x) = &\frac{-1}{\sqrt[3]{1024}}\left\{9x\left(\sqrt[3]{\sqrt{9x^2+4}-3x} -\sqrt[3]{\sqrt{9x^2+4}+3x} \right)\right.\\
 &\qquad\qquad\ \left.+\sqrt{9x^2+4}\left(\sqrt[3]{\sqrt{9x^2+4}-3x} +\sqrt[3]{\sqrt{9x^2+4}+3x} \right)\right\}.
 \end{split}
\end{equation*}
Similarly, we get
\begin{equation*}\begin{split}
g(y) =  &\frac{1}{\sqrt[3]{1024}}\left\{9y\left(\sqrt[3]{\sqrt{9y^2+4}-3y} -\sqrt[3]{\sqrt{9y^2+4}+3y} \right)\right.\\
 &\qquad\qquad\ \left.+\sqrt{9y^2+4}\left(\sqrt[3]{\sqrt{9y^2+4}-3y} +\sqrt[3]{\sqrt{9y^2+4}+3y} \right)\right\} = -h(y).
 \end{split}
\end{equation*}

Thus, the non-linear $C^2-$function
\begin{equation*}
u(x,y) = h(x)-h(y)
\end{equation*}
solves \eqref{toll} in the whole plane $\mathds R^2$.
\vspace*{-4em}
\begin{flushright}
\includegraphics[width=8cm]{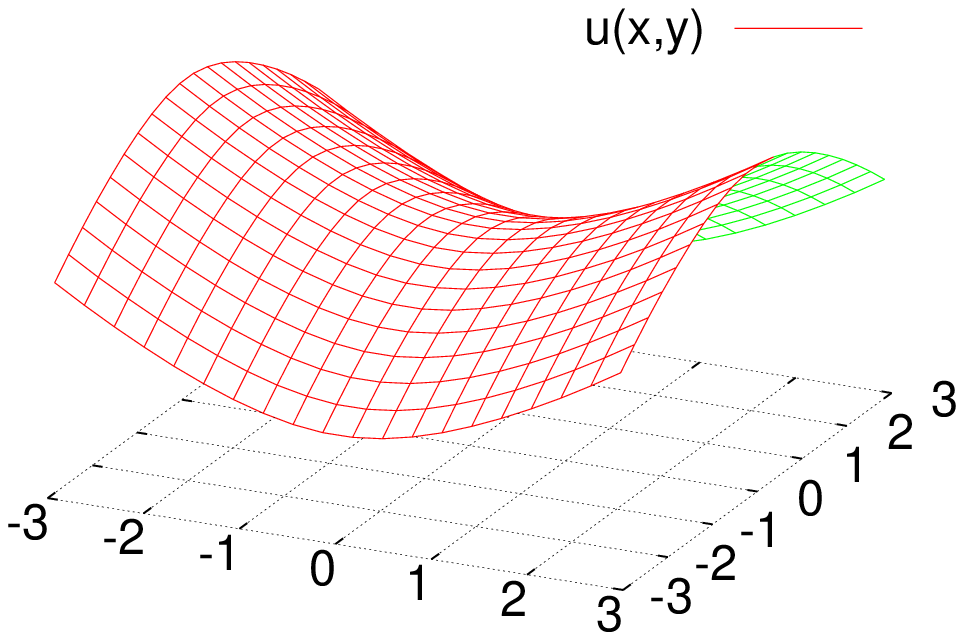}
\end{flushright}
\vspace*{-2.5em}

\begin{remark}
The above $u$ solves also the elliptic equation
\begin{equation*}
(1+{u_x}^2)u_{xx}-2u_x u_y u_{xy}+ (1+{u_y}^2)u_{yy} = 0.
\end{equation*}
\end{remark}

More generally, we have the following
\begin{theorem}
Let  $F_i\in C^1(\mathds R, \mathds R)$ be bijective with positive derivative $F_i' = f_i > 0$ for $i=1,2$. Then the equation
\begin{equation} f_1(u_x)\cdot u_{xx}+ 2B\cdot u_{xy}+ f_2(u_y)\cdot u_{yy} = 0,\label{eigenThm} \end{equation}
with an arbitary $B$ (depending on $x,y,u,u_x,u_y,u_{xx},u_{xy},u_{yy}$) has non-linear entire $C^2-$solutions in $\mathds R^2$, i.e. \eqref{eigenThm} does not have the Bernstein property. \\
(For the ellipticity of \eqref{eigenThm} assume $|B|< \sqrt{f_1(u_x)f_2(u_y)}$.)
\end{theorem}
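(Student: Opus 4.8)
The plan is to generalize exactly the separation-of-variables construction that already worked for the "wrong minimal surface equation" \eqref{toll}. The crucial observation is that the troublesome term $2B\cdot u_{xy}$ can be killed entirely by seeking a solution of the split form $u(x,y)=p(x)+q(y)$, since then $u_{xy}\equiv 0$ regardless of what $B$ happens to be. With that term gone, \eqref{eigenThm} reduces to the ODE condition $f_1(p'(x))\,p''(x)+f_2(q'(y))\,q''(y)=0$. The left-hand side is a function of $x$ alone plus a function of $y$ alone, so it vanishes identically precisely when each summand equals a constant: $f_1(p'(x))\,p''(x)=c$ and $f_2(q'(y))\,q''(y)=-c$ for some $c\in\mathds R$. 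Choosing $c\neq 0$ (say $c=1$) will force a non-affine solution.

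**Next I would** solve these two ODEs. Writing $a=p'$, the first equation reads $f_1(a)\,a'=1$, which separates to $f_1(a)\,da=dx$. Since $f_1=F_1'$, the antiderivative of the left side is exactly $F_1(a)$, so the relation integrates immediately to $F_1(p'(x))=x+k_1$. Here is where the hypotheses on $F_1$ earn their keep: because $F_1$ is bijective with $f_1=F_1'>0$, it is a strictly increasing $C^1$-diffeomorphism of $\mathds R$ onto $\mathds R$, so $F_1^{-1}$ exists, is $C^1$, and is defined on all of $\mathds R$. Hence $p'(x)=F_1^{-1}(x+k_1)$ is a well-defined $C^1$ function on the entire line, and a second integration gives $p\in C^2(\mathds R)$ globally. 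The same argument with $F_2$ and constant $-c$ produces $q'(y)=F_2^{-1}(-y+k_2)$ and a global $q\in C^2(\mathds R)$. Thus $u=p+q\in C^2(\mathds R^2)$ exists on the whole plane, mirroring the explicit Cardano computation above for the special case $f_i(t)=1+t^2$.

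**It then remains** to verify that this $u$ is genuinely non-linear, i.e. not affine. This is where the bijectivity of $F_i$ is essential a second time: if $u$ were affine then $p'$ would be constant, forcing $p''\equiv 0$ and hence $f_1(p')\,p''\equiv 0\neq c=1$, a contradiction. Equivalently, $p'(x)=F_1^{-1}(x+k_1)$ is a non-constant function of $x$ precisely because $F_1^{-1}$ is injective (being a bijection), so $p$ cannot be affine. Finally, the parenthetical ellipticity condition $|B|<\sqrt{f_1(u_x)f_2(u_y)}$ is automatically meaningful along our solution since $f_1,f_2>0$ keep the radicand positive, and it is imposed only to classify \eqref{eigenThm} as elliptic; the construction itself does not depend on it, because the $u_{xy}$-term has already been eliminated.

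**I do not expect any serious obstacle here.** The only point requiring care is the global solvability of the two ODEs, and this is handled cleanly by the standing assumptions: \emph{bijectivity} of $F_i$ guarantees that the inverse is defined for every real argument (so the solution extends to all of $\mathds R$ rather than blowing up or hitting a boundary of the range), while \emph{positivity} of $f_i=F_i'$ guarantees strict monotonicity of $F_i$, hence a single-valued $C^1$ inverse and a genuinely non-constant $p'$. Everything else is a direct transcription of the worked example, with $F_i$ playing the role that the antiderivative $t\mapsto t+t^3/3$ of $1+t^2$ played there.
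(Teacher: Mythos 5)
Your proposal is correct and follows essentially the same route as the paper: you kill the $2B\,u_{xy}$ term by the ansatz $u(x,y)=p(x)+q(y)$, separate into the two ODEs $f_1(p')p''=c$ and $f_2(q')q''=-c$ with $c=1$, and integrate to $p'=F_1^{-1}(x+k_1)$, $q'=F_2^{-1}(-y+k_2)$ using the global bijectivity and positivity of $f_i=F_i'$ exactly as the paper does. Your explicit check that the resulting $u$ is non-affine (which the paper leaves implicit) is a welcome addition, but the argument is otherwise a faithful reconstruction of the paper's proof.
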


\begin{proof}
We proceed analogously as above: To the end we construct a $C^2-$solution $u$ with $u_{xy}\equiv0$, i.e. $u(x,y) = h(x)+g(y), \ \text{with} \ g,h \in C^2(\mathds R)$. Thus, our equation \eqref{eigenThm} becomes: \begin{equation*} f_1(h'(x))h''(x)+f_2(g'(y))g''(y)=0.\end{equation*}
Put $f_1(h'(x))h''(x) = c$ and $f_2(g'(y))g''(y) = -c$, with an arbitrary constant $c\in \mathds R$.\\
For the linear solutions take $c=0$. Since we are interested in non-linear ones, let us choose $c=1$:\\
By separation of variables we get: \begin{equation*} F_1(h'(x)) = x \qquad \text{  and  } \qquad F_2(g'(y)) = -y.\end{equation*}

Since $F_i$ ($i=1,2$) is bijective in the whole of $\mathds R$, a non-linear entire $C^2-$solution is given by
\begin{equation*}
u(x,y)=\int{F_1}^{-1}(x)dx +\int{F_2}^{-1}(-y)dy,
\end{equation*}
wherein ${F_i}^{-1}$ is the bijective continuous inverse of $F_i$ ($i = 1,2$).
\end{proof}

\begin{description}
			\item[Example 1.] Taking $F_i(t) = t$   we find that  $u(x,y) = x^2-y^2$ solves the elliptic equation \begin{equation*} u_{xx}+u_{xy}+u_{yy}=0.\end{equation*}
			\item[Example 2.] With $f_i(t)=1+t^2$ \ and  $F_i(t) = t + \frac{t^3}{3}$ we obtain the equation \eqref{toll}.
			\item[Example 3.] Take \ $f_i(t)=\frac{\displaystyle 1}{\displaystyle \sqrt{1+t^2}}$ \ and \ $ F_i(t) =\operatorname{arsinh}t$ \ respectively, \\ \qquad then \ $u(x,y) = \cosh (x) - \cosh (y)$ solves
			\begin{align*}
			&\qquad \frac{u_{xx}}{\sqrt{1+{u_x}^2}}+ \frac{u_{yy}}{\sqrt{1+{u_y}^2}}= 0 \\
			  \text{or} &\qquad \sqrt{1+{u_y}^2}\cdot u_{xx} + u_{xy}+ \sqrt{1+{u_x}^2}\cdot u_{yy} = 0 \\
			  \text{also} &\qquad \sqrt{1+{u_y}^2}\cdot u_{xx}+ 2\tilde{B}u_{xy}+ \sqrt{1+{u_x}^2}\cdot u_{yy} = 0, \\
			 	&\qquad	\text{with an arbitary $\tilde{B}$ such that} \ |\tilde{B}|< \sqrt[4]{(1+{u_y}^2)(1+{u_x}^2)}.
			 \end{align*}
\end{description}

\begin{corollary} With the notation of the theorem we obtain:\\ The function
\begin{equation*}u = u(x,y)=\displaystyle \int{F_1}^{-1}(x)dx +\int{F_2}^{-1}(-y)dy\end{equation*}
also solves the equation
\begin{equation*}\frac{u_{xx}}{f_2(u_y)}+ 2\tilde{B}u_{xy}+ \frac{u_{yy}}{f_1(u_x)} = 0,\end{equation*} with an arbitary $\tilde{B}$, i.e. this equation does not have the Bernstein property in $\mathds R^2$.\\
(For the ellipticity of our last equation assume $|\tilde{B}|< \frac{\displaystyle 1}{\displaystyle \sqrt{f_2(u_y)f_1(u_x)}}$.)
\end{corollary}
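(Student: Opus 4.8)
The plan is to verify that the \emph{same} function $u$ constructed in the proof of the theorem solves this new equation as well, so that no additional analysis is required. The decisive structural feature is that $u(x,y)=h(x)+g(y)$ is a sum of a function of $x$ alone and a function of $y$ alone, whence $u_{xy}\equiv 0$ throughout $\mathds R^2$. Consequently the middle term $2\tilde B\,u_{xy}$ vanishes identically for \emph{every} choice of $\tilde B$, and the entire verification reduces to checking that the two remaining second-order terms cancel.

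First I would recall the two differential identities produced by the construction in the theorem. By design we had $f_1(h'(x))\,h''(x)=c=1$ and $f_2(g'(y))\,g''(y)=-c=-1$; written in terms of $u$ these read $f_1(u_x)\,u_{xx}=1$ and $f_2(u_y)\,u_{yy}=-1$, that is,
\begin{equation*}
u_{xx}=\frac{1}{f_1(u_x)}\qquad\text{and}\qquad u_{yy}=\frac{-1}{f_2(u_y)}.
\end{equation*}
Substituting these into the two fractions of the new equation and using $u_{xy}\equiv 0$, I would obtain
\begin{equation*}
\frac{u_{xx}}{f_2(u_y)}+2\tilde B\,u_{xy}+\frac{u_{yy}}{f_1(u_x)}
=\frac{1}{f_1(u_x)\,f_2(u_y)}+0-\frac{1}{f_1(u_x)\,f_2(u_y)}=0,
\end{equation*}
which is precisely the asserted identity.

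It remains only to collect the hypotheses. The bijectivity of $F_i$ together with $F_i'=f_i>0$ guarantees, exactly as in the theorem, that $u_x=F_1^{-1}(x)$ and $u_y=F_2^{-1}(-y)$ are well defined on all of $\mathds R$ and that $u$ is a genuinely non-linear entire $C^2$-function; no new regularity argument is needed, and positivity of $f_1,f_2$ keeps both fractions meaningful. The stated ellipticity bound $|\tilde B|<1/\sqrt{f_2(u_y)\,f_1(u_x)}$ is merely the usual discriminant condition $B^2<AC$ for the coefficient matrix of the new equation. I do not anticipate any genuine obstacle: the only point demanding care is the sign bookkeeping in the two second-order terms, which must be tracked so that the substitution produces an exact cancellation rather than a doubling.
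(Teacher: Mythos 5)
Your proposal is correct and follows exactly the argument the paper intends: the paper states the corollary without a separate proof precisely because, as you observe, the constructed $u$ satisfies $u_{xy}\equiv 0$, $f_1(u_x)u_{xx}=1$ and $f_2(u_y)u_{yy}=-1$, so the two fractions cancel identically. Your sign bookkeeping is accurate and no further argument is needed.
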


\begin{remark}
Both the bijectivity and the strict positivity of $F_i'$ respectively are essential for the conclusion of the theorem.
\end{remark}
 In fact we have the following counterexamples:
\begin{description}
			\item[Example] Take $f_i(t)=\frac{\displaystyle 1}{\displaystyle 1+t^2}$ and $F_i(t)=\arctan(t)$ respectively (so $F_i: \mathds R \to \mathds R$ is not bijective), then we get:\\
			$u(x,y)=\ln(\cos y) - \ln(\cos x)\in C^2\left(\left(-\frac{\pi}{2};\frac{\pi}{2}\right)\times\left(-\frac{\pi}{2};\frac{\pi}{2}\right)\right)$ which solves
			\begin{equation*} \text{the minimal surface equation } (1+{u_y}^2)u_{xx}-2u_x u_y u_{xy}+ (1+{u_x}^2)u_{yy} = 0
			\end{equation*}
		and	the equation  $\frac{\displaystyle u_{xx}}{\displaystyle 1+{u_x}^2}+\frac{\displaystyle u_{yy}}{\displaystyle 1+{u_y}^2}=0$ respectively, clearly not on all of $\mathds R^2$.
\end{description}
 The condition $F_i' > 0$ cannot be replaced by the strong monotonicity of $F_i$ (for $i = 1$ or $i = 2$). Otherwise the solution $u$ can develop singularities:
\begin{description}
			\item[Example] Take $f_i(t)=t^2$ and $F_i(t)=\frac{1}{3}t^3$  respectively, then we obtain:\\
			$u(x,y)=\frac{9}{4}\left(|x|^{4/3}-|y|^{4/3}\right)$ which solves the equation \begin{equation}{u_x}^2u_{xx}+2u_x u_y u_{xy}+ {u_y}^2u_{yy} = 0.\label{aronsson}\end{equation}
			\textsc{Aronsson} presented this $C^{1,1/3}(\mathds R^2)-$ "singular solution" of \eqref{aronsson} in \cite{Aronsson2}:\\  $u$ is $C^1$ in $\mathds R^2$, $C^\infty$ in each open quadrant and the coordinate axes are lines of singularity for $u$.\\ Interestingly, the equation \eqref{aronsson} has the Bernstein-property, see \cite{Aronsson}.
			\end{description}

\begin{acknowledgement}
This paper is a part of my diploma thesis written under supervision of Prof. Ulrich \textsc{Dierkes}.
\end{acknowledgement}

\begin{bibdiv}
\begin{biblist}
\bib{Aronsson}{article}{
author={{\textsc{Aronsson}, G.}},
title={On the partial differential equation ${u_x}^2u_{xx}+2u_x u_y u_{xy} +{u_y}^2u_{yy}=0$},
date={1968},
journal={Arkiv för Matematik},
volume={7},
pages={395--425},
}
\bib{Aronsson2}{article}{
author={{\textsc{Aronsson}, G.}},
title={On certain singular solutions of the partial differential equation ${u_x}^2u_{xx}+2u_x u_y u_{xy} +{u_y}^2u_{yy}=0$},
date={1984},
journal={Manuscripta Mathematica},
volume={47},
number={1--3},
pages={133--151},
}
\bib{Bernstein}{article}{
author={{\textsc{Bernstein}, S. N.}},
title={Sur un théorème de géométrie et son application aux équations aux dérivées partielles du type elliptique},
date={{1915--1917}},
journal={Comm. Soc. Math. de Kharkov 2ème sér.},
number={15},
pages={38--45},
translation={
title={Über ein geometrisches Theorem und seine Anwendung auf die partiellen Differentialgleichungen vom elliptischen Typus},
language={German},
date={1927},
journal={Mathematische Zeitschrift},
volume={26},
number={1},
pages={551--558},
}
}
\bib{DHT}{book}{
author={{\textsc{Dierkes}, U., \textsc{Hildebrandt}, S. and \textsc{Tromba}, A.}},
title={Global Analysis of Minimal Surfaces},
edition={2nd},
year={2010},
series={Grundlehren der Mathematischen Wissenschaften [Fundamental Principles of Mathematical Sciences]},
volume={341}
}
\bib{Simon}{article}{
author={{\textsc{Simon}, L.}},
title={Asymptotics for exterior solutions of quasilinear elliptic equations},
journal={Geometry from the Pacific Rim},
pages={343--361},
date={1997},
}
\bib{Simon2}{article}{
author={{\textsc{Simon}, L.}},
title={The minimal surface equation, \normalfont{In: Ossermann: Geometry V}},
journal={Encyclopaedia of Mathematical Sciences},
volume={90},
pages={239--266},
date={1997},
}
\end{biblist}
\end{bibdiv}

\bigskip
\footnotesize
\begin{AddedInProof}
Quite recently, I have found the presentation of \emph{P. A.} \textsc{Bezborodov} at the International Conference on Analysis and Geometry (1999, Novosibirsk, Russia) where a similar result was stated, however no proofs were given, cf. {\cyr Bezborodov, P.A., Kontrprimer k gipoteze Sa\u{i}mona, Tezisy Trudov Mezhdu\-narodno\u{i} konferentsii po analizu i geo\-metrii, Novosibirsk, 30 avg.-3 sent. 1999. -- Novosibirsk: Izd-vo IM SO RAN, 1999. -- S. 10--11}. \emph{(}\textsc{Bezborodov} \emph{P. A.}, A Counterexample to Simon's Conjecture, \emph{Novosibirsk, 1999}, in Russian.\emph{)}
\end{AddedInProof}

\end{document}